\DeclareRobustCommand{\SkipTocEntry}[5]{}
\let\oldbibliography\thebibliography
\renewcommand{\thebibliography}[1]{%
  \oldbibliography{#1}%
  \setlength{\itemsep}{0pt}%
}
\theoremstyle{plain}
\numberwithin{equation}{section}
\newtheorem{thm}{Theorem}
\newtheorem{lemma}[thm]{Lemma}
\newtheorem{defn}[thm]{Definition}
\theoremstyle{remark}
\renewcommand{\div}{{\rm div}}
\newcommand{\rmd}{{\rm d}}
\newcommand{\be}{\begin{equation}}
\newcommand{\ee}{\end{equation}}
\newcommand{\curl}{{\rm curl}} 
\newcommand{\ve}{{\varepsilon}}
\newcommand{\pa}{\partial}
\newcommand{\bq}{\begin{equation}}
\newcommand{\eq}{\end{equation}}
\title{\vspace{-12mm}On the existence of fibered three-dimensional  perfect fluid equilibria without continuous Euclidean symmetry}
\author{Theodore D. Drivas, Tarek M. Elgindi, and Daniel Ginsberg}
\date{\today}
\begin{document}

\maketitle

\vspace{-8mm}
\begin{abstract}
Following Lortz \cite{L70}, we construct a family of smooth steady states of the ideal, incompressible Euler equation in three dimensions that possess no continuous Euclidean symmetry.  As in Lortz, they do possess a planar reflection symmetry and, as such,  all the orbits of the velocity are closed.  Different from Lortz, our construction has a discrete $m$--fold symmetry and is foliated by invariant  cylindrical  level sets of a non-degenerate Bernoulli pressure, and are flexible (come in infinite dimensional families). Such examples are distinct from but broadly fit in the category of those constructed explicitly by Woolley \cite{W77} and Salat--Kaiser \cite{SK95,KS97}. These narrow the scope of validity of Grad's conjecture on non-existence of fibered equilibria without continuous symmetry. \cite{G1}. 
\end{abstract}

\begin{center}
\textit{Dedicated to Prof. Peter Constantin, our mentor and friend.}
\end{center}
\vspace{-4mm}

\section{Introduction}

Stationary states of the Euler equations play an important role in understanding the fluid motion.   In two dimensions, they are plentiful, and their structure can be rich \cite{DE23}. In three dimensions, far less is understood.  Given a domain $M\subset \mathbb{R}^3$  endowed with the standard Euclidean metric, they are defined by a vector field $u:M\to \mathbb{R}^3$ which is tangent to the boundary of $M$ if non-empty and satisfies
\begin{alignat}{2}
  \label{eul1}
  u\cdot \nabla u + \nabla p &=0, \\
  \label{eul2}
  \div u &=0.
\end{alignat}
In the above, $p:M\to \mathbb{R}$ is the pressure, defined by solving $-\Delta p = \div (u \cdot \nabla u)$ on $M$ with appropriate Neumann boundary data inherited from the equation. Provided the solution is  classical, this can be rephrased in an illuminating way
\be\label{Eulereqn}
u\times {\rm curl} \ \! u = \nabla H, \qquad H := \frac{1}{2} |u|^2+p,
\ee
 where the  function $H:M\to \mathbb{R}$ is called the Bernoulli pressure.  As such, the steady 3D Euler equations also have the interpretation of magnetohydrostatic equilibria \cite{G1, G2, G3, CDG21a}.

 Steady states in 3D can be essentially divided into two categories: those that are nearly two-dimensional in the sense that all orbits are confined to hypersurfaces, and those whose orbits explore volumes. The former are termed \emph{fibered} solutions.  Arnol'd famously classified a subclass of 3D steady states: roughly, those with non-constant Bernoulli pressure are fibered by invariant tori or cylinders \cite{A66, AK09}. See also \cite{S12}. The motion on each torus is quasiperiodic (the field lines are either all closed or all dense). The reason behind this result is that non-constant Bernoulli pressure implies that the velocity and vorticity vector fields are non-vanishing, and transverse at every point. Moreover, the Euler equation \eqref{Eulereqn} implies that $H$ is a first integral of these fields, e.g. $u\cdot \nabla H=0$. Hence, if $\nabla H$ is non-zero, particles are constrained to the level sets of $H$ in a neighborhood of that point. So, on any regular level set  $S:=\{H=c\}$, one finds that $u(x)$ and ${\rm curl} \ \! u(x)$ form a basis for the tangent space for $S$ at each point $x$. The only connected two-dimensional manifolds for which such fields can exist are tori or cylinders.  He  also conjectured that steady states having aligned velocity and vorticity (such as Beltrami flows), and therefore constant Bernoulli function, could behave much more wildly.  
 
 So far, with the notable exceptions of the work of Lortz \cite{L70} (see also \cite{J20}), Woolley \cite{W77} and Salat--Kaiser \cite{SK95,KS97}, all known steady states on $\mathbb{R}^3$ with non-constant Bernoulli pressure have a continuous Euclidean symmetry.  In particular, such fields that are axisymmetric can be build out of a scalar potential solving the  Grad-Shafranov equation.  Outside of Euclidean symmetry, there are very few existence results.  This, together with an understanding of the difficulty for existence, led Grad to conjecture that no fibered smooth solutions, in particular those with non-constant Bernoulli pressure,  exist outside of Euclidean symmetry.  Grad, who was a plasma physicist, was thinking about the confinement fusion problem and was interested in this non-existence result in light of devices such as the tokamak and stellarator, aimed at producing such states for the purposes of confining a nuclear fusion reaction.
 
The only well understood class of steady states outside of symmetry are Beltrami flows, for which there is an easy existence theory. As mentioned above, in general their field lines may be completely chaotic \cite{EP12, EP23}. There is one construction of note due to Lortz, originating from an idea of  Grad and Rubin \cite{G3}, which perturbs a harmonic vector field (and therefore a particular Beltrami flow) with a reflection symmetry which has all of its orbits closed and is therefore fibered by tori  \cite{L70}.  In his construction, the Bernoulli pressure is a given small-amplitude function of the period of revolution of particles on the closed orbits.  Being that the base state is harmonic, in general this period function is general not understood and in simple cases (e.g. the vector field $u_\mathsf{H}=e_z$ on the $\mathbb{R}^2\times \mathbb{T}$), the period function is constant.  As such, it is not clear from Lortz's construction that the pressure levels are cylinders which fiber the domain.  There can be islands separated by regions of compact level sets, of constancy etc.  See also \cite{WS20} for a construction with non-compact pressure surfaces and \cite{SVW} for an extension of Lortz's construction to other systems. Thus, neither Lortz nor Beltrami flows show whether Arnol'd's theorem applies to any steady state having cylindrical levels of the Bernoulli pressure but no continuous symmetry.  On the other hand, the works of Woolley \cite{W77} and Salat--Kaiser \cite{SK95,KS97} produce \emph{explicit} forms of cylindrical solutions which are far from having continuous symmetry (and indeed, are not perturbative of axisymmetric solutions). These solutions illustrate  Arnol'd's theorem applies outside continuous symmetry, but the issue of flexibility and isolation of the explicit objects they find is unclear.

In this work, we demonstrate the existence of such a non-symmetric 3D steady state, which are perturbuations of axisymmetric solutions and come in infinite dimensional families (parametrized by the shape of the outer cylinder).

\begin{thm}
    \label{mainthm}
There exists a cylindrical domain $M\subset \mathbb{R}^2\times \mathbb{T}$, and a $C^\infty$ smooth stationary solution of Euler $u:M\to \mathbb{R}^3$, tangent to $\partial M$,  such that $\nabla H$ is non-vanishing away from a line.  The solution is periodic in the direction set by said line, m-fold symmetric about this line, but possesses no continuous Euclidean symmetry. This solution is a perturbation of a monotone, non-degenerate two-dimensional rotational flow, with the outer boundary of the cylinder a freely prescribed perturbation, up to the discrete symmetries.\end{thm}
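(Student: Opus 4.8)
\bigskip
\noindent\textbf{Proof strategy.} The plan is to produce the solution of Theorem~\ref{mainthm} as a singular perturbation of a degenerate axisymmetric equilibrium, using the shape of the outer cylinder as an infinite-dimensional free parameter and exploiting Lortz's planar reflection symmetry twice: once to force every field line on every flux surface to close, and once to kill small divisors in the linearized equation. Work in cylindrical coordinates $(r,\theta,z)$ on $\R^2\times\mathbb{T}$ with orthonormal frame $e_r,e_\theta,e_z$, and take as base state the pure-swirl flow $u_0=g(r)\,e_\theta$ on $M_0=\{r\le 1\}\times\mathbb{T}$, with $g$ smooth and $g(r)=\Omega_0 r+O(r^3)$ near the axis $\{r=0\}$. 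The hypothesis that $u_0$ be \emph{monotone and non-degenerate} means: (i) $H_0'(r)=g(r)(g'(r)+g(r)/r)\neq 0$ on $(0,1]$, so that the Bernoulli function $H_0(r)$ of $u_0$ (which solves \eqref{Eulereqn}) has $\nabla H_0$ non-vanishing away from the axis and the round cylinders $\{r=c\}$ form a regular nested foliation, and (ii) the angular velocity $\Omega(r)=g(r)/r$, equivalently the revolution period $T_0(r)=2\pi/\Omega(r)$, is strictly monotone. This $u_0$ is fibered, and is invariant under all rotations about the axis, all $z$-translations, and the reflection $\sigma\colon (r,\theta,z)\mapsto(r,\theta,-z)$ acting on the velocity by $(u_r,u_\theta,u_z)\mapsto(u_r,u_\theta,-u_z)$.

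\bigskip
\noindent The target domain is $M_\rho=\{\,r\le 1+\rho(\theta,z)\,\}$ for a small $C^\infty$ function $\rho$ which is $2\pi/m$-periodic in $\theta$ and periodic and even in $z$; this is the advertised infinite-dimensional family, and for a generic such $\rho$ (say one with leading Fourier content $\cos(m\theta)\cos z$) the isometry group of $\partial M_\rho$ inside $\R^2\times\mathbb{T}$ is exactly the finite group generated by the $2\pi/m$-rotation and $\sigma$. I would straighten $M_\rho$ back to $M_0$ by a diffeomorphism, pull back \eqref{Eulereqn}, and look for $u=u_0+v$ with $v$ small in a high-regularity norm, divergence-free for the pulled-back metric, tangent to $\partial M_0$, invariant under the two discrete symmetries, solving the commutator form $[u,\curl u]=0$ of $\curl(u\times\curl u)=0$, together with the single de Rham period obstructing $u\times\curl u=\nabla H$, a scalar condition easily incorporated. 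Granting such a $u$: being $\sigma$-invariant it keeps $|u|^2$, the pressure and hence $H$ fixed, so $\sigma$ preserves each flux surface $\Sigma_c=\{H=c\}\cong\mathbb{T}^2$; on $\Sigma_c$, $\sigma$ acts orientation-reversingly and flips the $z$-homology, so it reverses the rotation number of the line field $u|_{\Sigma_c}$. As $\sigma_*(u|_{\Sigma_c})=u|_{\Sigma_c}$, that rotation number equals its negative, hence vanishes; since $\nabla H\neq 0$ off the axis makes $u$ and $\curl u$ a transverse pair spanning $T\Sigma_c$, Arnol'd's dichotomy applies and a zero rotation number forces every orbit on $\Sigma_c$ to close. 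Thus the constructed $u$ is automatically fibered by closed-orbit cylinders, and $H$ is a non-degenerate Bernoulli pressure by openness of (i).

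\bigskip
\noindent The analytic heart, and the main obstacle, is the linearized operator $\mathcal A\,v=[v,\curl u_0]+[u_0,\curl v]$ (coupled to $\rho$ through the metric and boundary condition). Expanded in Fourier modes $e^{i(k\theta+nz)}$ with $k\in m\Z$ and the $z$-parity constraint, the only term of $\mathcal A$ carrying the top $\theta$-derivative is $\Omega(r)\,\partial_\theta\curl v$, i.e.\ on the $(k,n)$ sector it is $\propto k\,\Omega(r)\,\curl_{(k,n)}(\cdot)$, and this \emph{degenerates entirely on the axisymmetric sector $k=0$}. There $\mathcal A$ is of lower order, with an infinite-dimensional kernel (infinitesimal reparametrizations of the flux foliation) and, dually, exactly one scalar obstruction per flux surface: a flux-surface average of the equation must vanish. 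This cohomological obstruction --- one real constraint for each of a continuum of surfaces --- is precisely the phenomenon underlying Grad's expectation of non-existence, and defeating it is the crux. The device is a Lyapunov--Schmidt reduction: solve simultaneously for $v$ and for an auxiliary profile function of the flux label (the analogue of the free functions in the Grad--Shafranov equation) acting as a Lagrange multiplier that cancels the per-surface obstruction; the bifurcation equation for the multiplier linearizes, after a short computation, to multiplication by a non-vanishing multiple of $T_0'$, invertible exactly by non-degeneracy (ii). On the sectors $k\neq 0$ one inverts $k\,\Omega(r)\,\curl_{(k,n)}+(\text{lower order})$ as a family of first-order ODE systems in $r$ with regularity at $r=0$ and tangency at $r=1$; here the reflection symmetry --- which replaces the $z$-circle by an interval with Neumann conditions and thereby rules out the resonant homogeneous solutions --- together with $\Omega$ bounded away from $0$ yields an inverse with no loss (indeed a gain in $k$), and the $\rho$-induced off-diagonal coupling of modes is a small perturbation absorbed by the iteration. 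With the linearized problem solved by a tame inverse on the symmetry-adapted spaces --- a controlled derivative loss may survive from the $k=0$ and near-axis analysis, the coordinate degeneracy at $r=0$ being treated in weighted spaces --- I would close the nonlinear system by a Nash--Moser scheme (an ordinary implicit function theorem if the loss can be removed), obtaining for every sufficiently small admissible $\rho$ a $C^\infty$ solution $u$ on $M_\rho$.

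\bigskip
\noindent Finally, no continuous Euclidean symmetry survives: a one-parameter group of isometries of $\R^2\times\mathbb{T}$ preserving $u$ must preserve $H$, hence the foliation $\{\Sigma_c\}$, and being connected and fixing the axis (the unique critical set of $H$) it must preserve the outermost leaf $\partial M_\rho$; but $\rho$ was chosen so that $\partial M_\rho$ has finite isometry group, a contradiction. The $2\pi/m$-rotation and the reflection $\sigma$ are symmetries by construction, and $\rho$ ranges over an infinite-dimensional family, which yields Theorem~\ref{mainthm}. The decisive difficulty throughout is thus the gross non-surjectivity of the force-balance operator around a fibered flow --- the continuum of flux-surface-average obstructions --- and its neutralization via the profile freedom in a symmetry-respecting Lyapunov--Schmidt step; this is exactly where the monotonicity and non-degeneracy of the base rotational flow are indispensable, everything else (small divisors, the axis degeneracy, derivative loss) being subordinate to making that step close.
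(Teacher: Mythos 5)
Your proposal takes a genuinely different route from the paper. The paper does not linearize the steady Euler equations at all: it passes to the Clebsch variables $(\tau, H)$, runs the Lortz fixed-point iteration (solve a transport ODE for $\tau_N$ along the closed orbits of $u_N$, set $H_N=\mathcal{H}_*(T_N)$, define $\omega_N = \nabla\tau_N\times\nabla H_N$, solve a div-curl system for $u_{N+1}$), and closes the iteration by a Poincar\'e inequality on $m$-fold-symmetric mean-zero functions that yields a contraction factor $C/m$. The point --- and the paper's declared novelty over Lortz --- is that with $\omega_N = \nabla\tau_N\times\nabla H_N$ the vorticity candidate is automatically solenoidal and compatible with the field-line structure, so the div-curl problem is solvable with \emph{no} cohomological obstruction ever appearing; the smallness comes from $1/m$, not from small Bernoulli pressure (Lortz) and not from the size $\ve$ of the domain perturbation. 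Your scheme instead confronts the classical linearization head-on: you meet the per-surface average obstruction on the $k=0$ sector, propose to cancel it by a Grad--Shafranov-type profile function in a Lyapunov--Schmidt step, invert the $k\neq 0$ sectors mode-by-mode as $r$-ODEs, and close via Nash--Moser. You also use a different discrete reflection ($z\mapsto -z$, with orbit closure via a rotation-number argument on the flux tori) than the paper ($x_2\mapsto -x_2$, with orbit closure by the elementary ODE Lemma~\ref{closedlines}, applied at every stage of the iteration where no foliation yet exists).

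As written, the proposal has three substantive gaps that the paper's route is designed precisely to sidestep. (1) The invertibility of the bifurcation equation on the $k=0$ sector is asserted ("after a short computation, multiplication by a non-vanishing multiple of $T_0'$") but not demonstrated; identifying the correct profile freedom, computing its Jacobian on the flux-surface-averaged constraint, and showing non-degeneracy is the analytical crux of your whole scheme and cannot be left implicit --- this is exactly the step the paper avoids by \emph{prescribing} $H=\mathcal{H}_*(T)$, where the role of \textbf{(H1)}--\textbf{(H2)} is merely to make $\mathcal{H}_*$ smooth, not to invert a bifurcation equation. (2) For $k\neq 0$ you claim the $r$-family of ODE boundary-value problems (with regularity at $r=0$ and tangency at $r=1$) has no resonant eigenvalues, with the $z$-reflection "ruling out the resonant homogeneous solutions"; but the relevant degeneracy is the second-order Sturm--Liouville problem in $r$ at each fixed $(k,n)$, and nothing in the reflection symmetry automatically excludes its spectrum. (3) You never exploit the $m$-fold symmetry as a source of smallness; you only use it as a Fourier constraint $k\in m\Z$. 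If the $r$-ODE inverses are uniformly bounded this may be harmless, but the paper's entire argument hinges on the $O(1/m)$ Poincar\'e gain of Lemma~\ref{Xpoin} to compensate for the $O(1)$ size of $\mathcal{H}_*'$, and you have not replaced that mechanism with a verified tame inverse. None of these is obviously fatal, but together they constitute most of the work, whereas the paper's Clebsch/Lortz route disposes of them structurally.
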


Such a solution is fibered by invariant (wobbly) cylinders which are levels of the Bernoulli function.  We remark that the size of the pressure of the solution may be large; see Fig \ref{fig}.

\begin{figure}[h!]
  \begin{center}
  \includegraphics[height=1.5in]{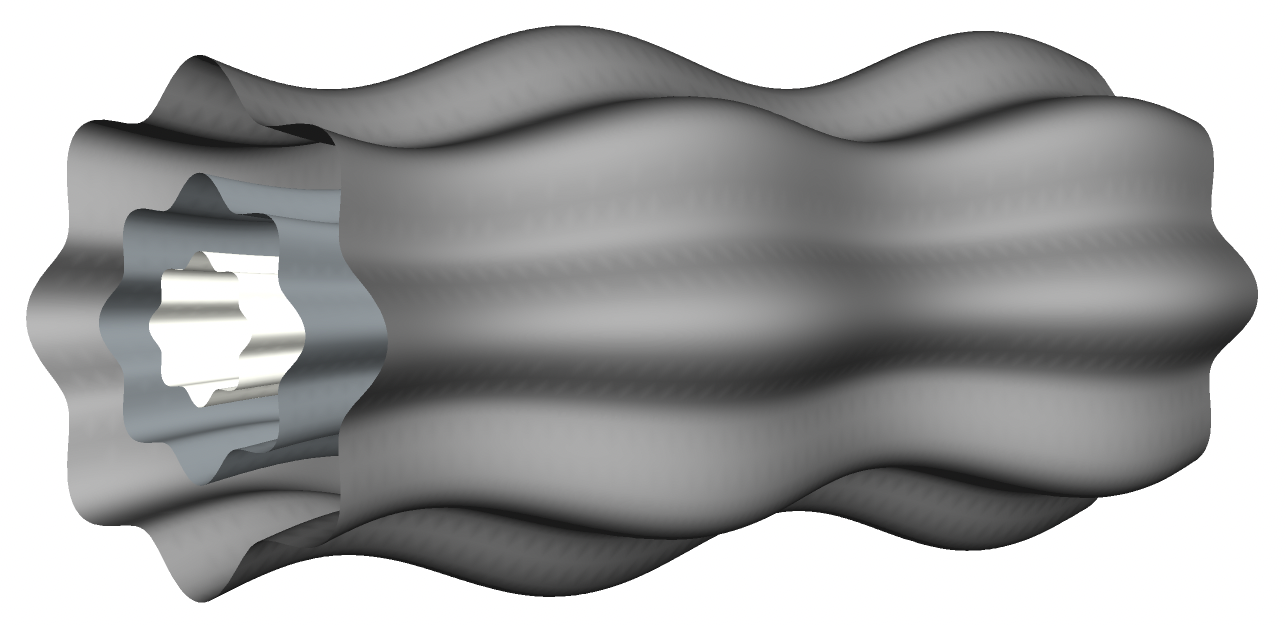}
  \end{center}
  \caption{An $8$--fold symmetric steady state, fibered by levels of the Bernoulli pressure $H$. All orbits of the velocity are confined  to the isosurfaces of $H$, and wrap the "short way".}
  \label{fig}
\end{figure}

In fact, our solution is constructed by modifying Lortz's argument to apply to perturbations of solutions with large Bernoulli pressure (as opposed to harmonic fields, with trivial Bernoulli  pressure).  The reason for Beltrami fields in Lortz's construction is that the pressure provides that small parameter which he uses to break the symmetry and close an iteration.  To overcome this difficulty, our small parameter is instead $1/m$, where $m$ is the multiplicity of discrete reflection symmetries enjoyed by the solution.

A couple concluding remarks should be made.  First, we do not believe the discrete symmetry of the solution is essential, and that there should exist steady state nearby those constructed here which break \emph{all} Euclidean symmetries.  Such an object would certainly be interesting to construct. However, second, it is clear from the discussion of Grad and Rubin \cite{G3} and Grad \cite{G71} that Grad understood that steady states with all closed field lines could exist outside symmetry, although Lortz had the first rigorous such construction \cite{L70}. Our present contribution is to furthermore guarantee the pressure levels are cylinders, allow for large pressure (in the language of confinement fusion, large plasma $\beta$), and are flexible (come in large families of such solutions). The former two properties appear also in the works of Woolley \cite{W77} and Salat--Kaiser \cite{SK95,KS97}. As such, it seems reasonable to stipulate that the Grad's conjecture should include  the provision that the vector field has ergodic orbits  on nearly all its invariant tori.  Such things, Grad clearly states, should be either non-existent or exceptionally rare (isolated) outside of (continuous) Euclidean symmetry \cite{G2}.
Moreover, these hypothetical asymmetric twisting equilibria -- stellarators -- are believed to be relevant objects to advance plasma confinement fusion \cite{L19}.

\section{Steady Euler and Clebsch Variables}

We aim to produce a vector field $u$ which solves steady Euler.  We will do this by perturbing  a given axisymmetric solution $u_*$
which occupies the  "straight" periodic unit cylinder $D_* := \mathbb{D} \times \mathbb{T}$. Specifically, for a smooth $\Omega$ with further conditions to be specified, we will perturb
 \be\label{ustar}
u_*:= \Omega(r)re_\theta, \qquad e_\theta:= \frac{x_1}{r} e_2 - \frac{x_2}{r}e_1
\ee
where $r:= \sqrt{x_1^2+ x_2^2}$, which satisfies \eqref{eul1}-\eqref{eul2} with hydrodynamic pressure
$$
    p_*(r) = \int_0^r \rho \Omega^2(\rho) d\rho.
$$
We note  that Bernoulli pressure $H_* = \frac{1}{2} |u_*|^2 + p_*$ takes the form
$$
    H_*(r) = \frac{1}{2} (r\Omega(r))^2 + \int_0^r \rho \Omega^2(\rho)\, d\rho
   % =  \int_0^r\Big[\frac{\rmd}{\rmd \rho} ( \frac{1}{2}\rho^2 \Omega^2(\rho))+ \rho \Omega^2(\rho)\Big]\, d\rho
    =  \int_0^r\Big[ \rho \Omega'(\rho)+ 2 \Omega(\rho)\Big]\, \rho \Omega(\rho) d\rho
    $$
and so if $\Omega$ is monotone and does not change sign,
$\nabla H_*$ is nonzero except at $\{r=0\}$.

The orbits of $u_*$ are closed and have period $T_*(r) = \frac{2\pi}{\Omega(r)}$. For our construction
we will want to be able to write the Bernoulli function $H_*$ as a smooth function
of $T_*$. For this, we will assume that $\Omega$ satisfies the following two properties:
\begin{itemize}
    \item[] \textbf{(H1)} $r \mapsto \Omega(r)$ is invertible and does not change sign for $r \in [0, \infty)$, and
    \item[] \textbf{(H2)} $|\Omega''(0)| > 0$ 
\end{itemize}
The condition \textbf{(H1)} guarantees that the period $T_*$
is a bounded and invertible function, with inverse $r_*(T) = \Omega^{-1}\left(\tfrac{2\pi}{T}\right)$.
This, together with \textbf{(H2)}, lets us think of the Bernoulli
function $H_*$ as a smooth function of the period $T$. Indeed,
if we define
$
  \mathcal{H}_*(T) = H_*(r_*(T)),
$
and, noting that $r_*'(T) = -\frac{2\pi}{T^2}\frac{1}{\Omega'(r_*(T))}$ we have the formula
\begin{align}
  \mathcal{H}_*'(T) &= -\frac{2\pi}{T^2} \frac{H_*'(r_*(T))}{\Omega'(r_*(T))}
= \frac{2\pi}{T^2} \frac{\Big[ \rho \Omega'(\rho)+ 2 \Omega(\rho)\Big]\, \rho \Omega(\rho) }{\Omega'(\rho)}\Bigg|_{\rho=r_*(T)},
%  \mathcal{H}_*''(T) &= -\frac{(2\pi)^3}{T^5} \frac{r_*(T)}{\Omega'(r_*(T))} -  \frac{3}{T}   \mathcal{H}_*'(T),\\
%    \mathcal{H}_*'''(T) &= \frac{(2\pi)^4}{T^7}\frac{1}{\Omega'(r_*(T))^2} \left[  1 -\frac{r_*(T)\Omega''(r_*(T))}{\Omega'(r_*(T))} \right]-  \frac{3}{T}   \mathcal{H}_*''(T)  +  \frac{3}{T^2}   \mathcal{H}_*'(T) ,
\end{align}
where we used the explicit formula for $H_*$. Noting that smoothness of $u_*$ demands that  $\Omega^{(2n+1)}(0)=0$ for all $n\geq 0$, smoothness of $\mathcal{H}_*$ can then be checked by Taylor expansion of the period function near $r_*=0$.
%\be
%\frac{\Omega'(r_*(T))}{r_*(T)} =  \Omega''(0)+ \tfrac{1}{2} r_*(T) \Omega'''(0)+ \tfrac{1}{6}r_*^2(T) \Omega''''(0) + O(r_*(T)^3)
%\ee
%\be
%\Omega''(r_*(T)) =  \Omega''(0)+  r_*(T) \Omega'''(0)+ \tfrac{1}{2}r_*^2(T) \Omega''''(0) + O(r_*(T)^3)
%\ee
%Thus, if  \textbf{(H2)} holds it follows that
%\be
%\frac{r_*(T)\Omega''(r_*(T))}{\Omega'(r_*(T))} %= \frac{1+\tfrac{1}{2}r_*^2(T)  \frac{\Omega''''(0)}{\Omega''(0)}+ O(r_*(T)^2)}{ 1+ \tfrac{1}{6} r_*^2(T) \frac{\Omega''''(0)}{\Omega''(0)}+ O(r_*(T)^2)}
%= 1+ \tfrac{1}{3}r_*^2(T)  \frac{\Omega''''(0)}{\Omega''(0)} + O(r_*^4(T)).
%\ee
Repeating the same argument shows higher-order
derivatives of $\mathcal{H}_*$ are also continuous. 

The starting point for the construction is to seek \emph{Clebsch variables}
$H$, $\tau$ associated to $u$,
\begin{align}
  \label{lortz1}
  u\cdot \nabla H &= 0,\\
  \label{lortz2}
  u\cdot \nabla \tau &= 1.
\end{align}
See \cite{C17} for a clear introduction. This is also the starting point of \cite{L70,W77,SK95,KS97}. Given such $H, \tau$, we define
\begin{align}
  \label{lortz3}
  \omega &= \nabla \tau \times \nabla H.
\end{align}
Note that, by construction, we have $\div\, \omega = 0$. Moreover, it holds 
\begin{equation}
  \label{}
  u \times \omega
  = (u\cdot \nabla \tau) \nabla H- (u\cdot \nabla H) \nabla \tau
  = -\nabla H.
\end{equation}
Provided that $u$ satisfies
\begin{align}
  \label{}
  \div \,u&=0,\\
  \curl \,u&= \omega,
\end{align}
the resulting $u$ would satisfy \eqref{Eulereqn} and thus the original
system \eqref{eul1}-\eqref{eul2}. In the rest of this section, we describe
our strategy for solving the above system.

Equations \eqref{lortz1} and \eqref{lortz2} are steady transport equation in three dimensions, and are somewhat delicate because $u$ may have a mix of both closed and ergodic field lines.  Indeed Grad's original objection concerned solving \eqref{lortz2} in the presence of ergodic field lines.  To sidestep this difficultly, we follow Lortz and impose a reflection symmetry so that the constructed  vector field will have all closed orbits. Let us now define the notion of parity:
\begin{defn}
    \label{baseparity}
Fix a plane $\Pi\subset \mathbb{R}^3$. Let $R_\Pi $ be the reflection about $\Pi $.  We say that a function $g:\mathbb{R}^3\to \mathbb{R}$ is odd if $g\circ R_\Pi = - g$ and even if $g\circ R_\Pi = g$.
    We say that a vector field $X:\mathbb{R}^3\to \mathbb{R}^3$ is odd   if $X\circ R_\Pi = -R_\Pi X$.
  We say that $X$ is even     if $X\circ R_\Pi = R_\Pi X$.   
\end{defn}

Note that $X$ is odd  (resp. even) if and only if $R_\Pi^*X = -X$
(resp. $R_\Pi^*X = X)$, where $R_\Pi^*X = R_\Pi^{-1} X\circ R_\Pi= R_\Pi X \circ R_\Pi$ 
denotes the pullback
of $X$ by $R_\Pi$.

    From hereon, without loss of generality we choose $\Pi =\{x_2=0\}$ to be the $x_1-x_3$ plane. In this case, $R_\Pi:=R_ \Pi(x_1, x_2, x_3) = (x_1, -x_2, x_3)$.
Moreover, if $X$ is odd, it means that $X\cdot e_1$ and $X\cdot e_3$ are odd in $x_2$
    and $X\cdot e_2$ is even in $x_2$.   With this definition,
    for any smooth function $\Omega$ the vector field $u_*$ from \eqref{ustar} is
    odd.

We now define the perturbed domain we will construct our solution in.
We consider a perturbed cylindrical domain of this form
\begin{equation}
  \label{dvedef}
  D_\ve = \{r = 1 + \ve g(\theta, z)\}
\end{equation}
where the function $g: \mathbb{S}^1 \times \mathbb{T}$ which is even with respect to 
$\Pi$
and which is $m$-fold symmetric,
\begin{equation}
  \label{}
  g( \theta + 2\pi/m,z) =   g( \theta ,z) 
\end{equation}
for some $m \in \mathbb{Z}_{>0}$.  We aim to construct a solution $u = u_* + \ve u'$ in $D_\ve$
with 
\begin{alignat}{2}
    \label{eul3}
  u\cdot n &= 0, &&\qquad \text{ on } \pa D_\ve,
\end{alignat}
and so that $u$ is odd 
with respect to $\Pi$ (recall Definition \ref{baseparity}).

Once again, the importance of the parity of $u$ is that, together with proximity to
the field $u_*$, it guarantees that all field lines of $u$ are closed
and this will let us find $\tau$ and $H$ satisfying \eqref{lortz1}-\eqref{lortz2}
To see this, we start with the following lemma. 
\begin{lemma}
  \label{closedlines}
  Let $X$ be an odd and Lipschitz vector field on $\mathbb{R}^3$. Let $\gamma$ be any trajectory of $X$
    which passes through the half-planes $\{\theta = 0\}$ and $\{\theta = \pi\}$. Then $\gamma$ is closed.
\end{lemma}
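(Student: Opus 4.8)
The plan is to use the fact that oddness of $X$ makes its flow \emph{time-reversible} through the mirror $R_\Pi$: if $\gamma$ solves $\dot\gamma = X(\gamma)$, then for any $t_0$ the reflected-and-reversed curve $s\mapsto R_\Pi\,\gamma(t_0-s)$ is again a trajectory. Indeed, differentiating and using $R_\Pi^{-1}=R_\Pi$ together with the oddness relation $X\circ R_\Pi = -R_\Pi X$ gives $\frac{d}{ds}R_\Pi\gamma(t_0-s) = -R_\Pi X(\gamma(t_0-s)) = X\big(R_\Pi\gamma(t_0-s)\big)$, as claimed.

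First I would fix the parametrization of $\gamma$ so that $\gamma(0)=p_0$ lies on $\{\theta=0\}$ and $\gamma(t_1)=p_1$ lies on $\{\theta=\pi\}$ for some time $t_1$. Since the two half-planes $\{\theta=0\}$ and $\{\theta=\pi\}$ are disjoint, $p_0\neq p_1$, so $t_1\neq 0$. Both $p_0$ and $p_1$ lie in the plane $\Pi=\{x_2=0\}$ and are therefore fixed by $R_\Pi$.

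Next, I apply the reversibility observation twice. At $t_0=0$: the curve $\alpha(s):=R_\Pi\gamma(-s)$ is a trajectory with $\alpha(0)=R_\Pi p_0 = p_0 = \gamma(0)$, so by uniqueness of solutions to the ODE (here is the single place Lipschitz regularity is used) $\alpha\equiv\gamma$ on their common interval of definition, i.e. $\gamma(-s)=R_\Pi\gamma(s)$. At $t_0=t_1$: the curve $\beta(s):=R_\Pi\gamma(t_1-s)$ is a trajectory with $\beta(0)=R_\Pi\gamma(t_1)=R_\Pi p_1 = p_1 = \gamma(t_1)$, so uniqueness again gives $\gamma(t_1+s)=R_\Pi\gamma(t_1-s)$.

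Finally I would compose the two reflection symmetries to produce a translation symmetry. Substituting $s=t_1+u$ into the second identity yields $\gamma(2t_1+u)=R_\Pi\gamma(-u)$, and the first identity rewrites the right-hand side as $\gamma(u)$. Hence $\gamma(2t_1+u)=\gamma(u)$ for all $u$; a routine continuation argument then shows $\gamma$ extends to a trajectory defined for all time and is $2t_1$-periodic, and since $t_1\neq 0$ the period is nonzero, so $\gamma$ is a closed orbit. The only points needing care are the bookkeeping of the maximal interval of existence and confirming $t_1\neq 0$ (which is where disjointness of the two half-planes enters); there is no substantial obstacle beyond invoking ODE uniqueness.
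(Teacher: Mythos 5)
Your proof is correct and uses essentially the same approach as the paper: oddness of $X$ implies that the reflected, time-reversed curve $s\mapsto R_\Pi\gamma(t_0-s)$ is again a trajectory, and ODE uniqueness anchored at the crossing points on $\Pi$ then forces closure. The only cosmetic difference is that you apply this reflection symmetry at both crossing times and compose the two identities into a translation $\gamma(u+2t_1)=\gamma(u)$, whereas the paper applies it once (centered at the second crossing, phrased in cylindrical coordinates) and then sets $t=t_1$ and invokes $\gamma(0)\in\Pi$.
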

\begin{proof}
      Write $\gamma(t) = r(t)e_r + \theta(t) e_\theta + z(t) e_z$. Without loss
    of generality, we can assume that $\theta(0) = 0$ and $\theta(1) = \pi$.
    We claim that for all $t$,
    \begin{equation}
      \label{claim}
        \theta(1+t) = 2\pi - \theta(1-t),\qquad
        r(1+t) = r(1-t),\qquad
        z(1+t) = z(1-t).
    \end{equation}
    If this holds, then setting $t = 1$ shows that the curve $\gamma$ is closed (with period equal to 2).
    Clearly \eqref{claim} holds at $t = 0$. If we write $\theta_{+}(t) = \theta(1+t)$,
    $\theta_-(t) = 2\pi - \theta(1-t)$ as well as $r_{\pm}(t) = r(1\pm t)$ and
    $z_{\pm}(t) = z(1\pm t)$, then by the parity of $X$, we find
    \begin{equation}
      \label{}
      \frac{\rmd}{\rmd t} r_{\pm} = X_r(r_{\pm}, \theta_{\pm}, z_{\pm}),
      \qquad
      \frac{\rmd}{\rmd t} \theta_{\pm} = X_\theta(r_{\pm}, \theta_{\pm}, z_{\pm})
      \qquad
      \frac{\rmd}{\rmd t} z_{\pm} = X_z(r_{\pm}, \theta_{\pm}, z_{\pm}),
    \end{equation}
    and by the uniqueness theorem for ODE this gives \eqref{claim}.
\end{proof}

In our construction to follow, we will build iteratively a $u$ which perturbs $u_*$:
$u = u_* + \ve  u'$. 
By the discrete symmetry assumptions on $u$, it must vanish along the $z$--axis. 
Using this, since all field lines of $u_*$ pass through both
$\theta = 0$ and $\theta = \pi$,  so do those of $u$ for sufficiently small $\ve$. Thus, by the Lemma, all orbits of such $u$ are closed.

With this knowledge, for such a $u$ we can therefore define a function $\tau$ by
\begin{equation}
  \label{}
    u\cdot \nabla \tau = 1.
\end{equation}
Since $u$ has closed field lines, such $\tau$ is necessarily multi-valued; we orient
the domain so that 
it is well-behaved away from the plane of symmetry $\Pi$ but it jumps across $\Pi\cap \{x_1\geq 0\}$.
Letting $T$ denote the period of the trajectories of $u$, then $u\cdot \nabla T = 0$,
and
if we let
$\tau|_{\Pi_{\pm}}$ denote the limits of $\tau$ taken from
either side of $\Pi$, the period satisfies
\begin{equation}
  \label{Ttau}
  \tau|_{\Pi_+} - \tau|_{\Pi_-} = T.
\end{equation}
For $p \in D_\ve$, $\tau(p)$ is the travel time from $\Pi$ to $p$ along
the orbits of $u$, up to an integer multiple of $T(x)$, where
$x \in \Pi$ is the unique point on $\Pi$ lying on the same orbit
as $p$.

%Now, the background velocity field $u_*$ has period $T_* = \frac{2\pi}{\Omega(r)}$.
%\Red{We assume $\Omega = \Omega(r)$ is an invertible function. Then
%$T_*$ is invertible with inverse $R(T) = \Omega^{-1}(1/T)$. We then define
%$H_*$ by $H_*(T) = h_*(R(T))$. We have $R'(T) = 
%-\frac{1}{T^2}\frac{1}{\Omega'(\Omega^{-1}(1/T))}.$
%Then $P_*'|_{r = 0} = \lim_{h \to 0} \frac{p'(h)}{\Omega'(h)} = \frac{p''(0)}{\Omega''(0)}$
%so we need $\Omega''(0) > 0$}
%Since $\Omega' > 0$, we can write $h_*(r) = H_*(T_*)$ for a smooth function $H_*$.
%Then the function
Recalling the period-pressure relation $\mathcal{H}_*(T) = H_*(r_*(T))$, 
we define a Bernoulli function $H$ for $u$ by
\begin{equation}
  \label{qdef}
  H = \mathcal{H}_*(T),
\end{equation}
noting that
$u\cdot \nabla H = \mathcal{H}_*'(T) u\cdot \nabla T = 0$ by construction. We now define
\begin{equation}
  \label{omegadef}
  \omega = \nabla \tau \times \nabla H = \mathcal{H}_*'(T) \nabla \tau \times \nabla T.
\end{equation}
Because
$\tau$ is not continuous, a priori $\omega$ may fail
to be continuous across $\Pi$.
However, since $u$ is odd with respect to $\Pi$, $u|_{\Pi}$ is tangent
to $\Pi$. Thus, because $u \cdot \nabla T = 0$, it follows that
$\nabla T \times \nabla$ involves
only derivatives tangent to $\Pi$, and so the jump in
$\omega$ across $\Pi$ is
\begin{equation}
  \label{}
  \omega|_{\Pi_+} - \omega|_{\Pi_-} 
  = \mathcal{H}_*'(T)  \nabla (\tau|_{\Pi_+} - \tau|_{\Pi_-}) \times \nabla T = 0,
\end{equation}
by \eqref{Ttau}. As a consequence, $\omega$ is continuous across $\Pi$.
We will see later on that in fact $\omega$ is more regular.

To solve the system, it remains to guarantee that with $\omega$
defined as in \eqref{omegadef}, we have $\curl u = \omega$. 
The above motivates the iteration scheme described in the next section.

\section{The Lortz iteration}

We now describe the iteration we will use to construct solutions,
following Lortz \cite{L70}.
Let $u_N$ be an odd and $m$-fold symmetric vector field.
By Lemma \ref{closedlines}, because $u_N$ is odd, provided
$\ve$ is sufficiently small, each integral curve $\gamma_N$ of $u_N$ is closed.
We seek $\tau_N = \tau_* + \tau_N'$ satisfying
\begin{equation}
  \label{tauN}
  u_N\cdot \nabla \tau_N = 1.
\end{equation}
For this, we define $\tau_N'$ by solving
\begin{equation}
  \label{tauNprime}
    u_N\cdot \nabla \tau_N' = (u_* - u_N)\cdot \nabla \tau_*,\qquad
    \oint_{\gamma_N} \tau_N'\, \rmd s = 0.
\end{equation}
The mean-zero condition will be used later on to ensure smallness
of some terms in our iteration; see Lemma \ref{Xpoin}. 

We note that $\tau_N$ has a jump discontinuity across $\Pi$, but
it is smooth away from $\Pi$ if $u_N$ is.
Given this $\tau_N$, we let $T_N = \tau_N|_{\Pi_+} - \tau_N|_{\Pi_-}$ denote the period
of $\gamma_N$, and define $H_N$ by
\begin{align}
  \label{qN}
  H_N &= \mathcal{H}_*(T_N),
\end{align}
and we now define
\begin{align}
  \label{omegaN}
  \omega_N = \nabla \tau_N \times \nabla H_N = \mathcal{H}_*'(T_N) \nabla \tau_N \times \nabla T_N.
\end{align}
Then, as mentioned in the previous section, even though $\tau_N$ jumps across $\Pi$,
$\omega_N$ is continuous
across $\Pi$, and if $\tau_N \in C^{k,\alpha}$ away from $\Pi$ then $\omega \in C^{k-1,\alpha}$
away from $\Pi$ as well. We also note that $\tau_N$ is odd and $H$ is even, so $\omega_N$
is even.

To pass to the next step of the iteration,
we define $u_{N+1}$ by solving the div-curl system
\begin{alignat}{2}
  \label{ellN1}
  \div \,u_{N+1} &= 0, &&\text{ in } D_\ve,\\
  \curl \,u_{N+1} &= \omega_N,&&\text{ in } D_\ve,\\
  u_{N+1}\cdot n &=0, \qquad &&\text{ on } \pa D_\ve.
  \label{ellN3}
\end{alignat}
This does not uniquely determine $u_{N+1}$, because there is a one-dimensional
family $\mathsf{H}$ of harmonic vector fields on $D_\ve$ which are tangent to $\pa D_\ve$.
To get a unique solution $u_{N+1}$, we add the requirement that
\begin{equation}
  \label{proj}
  \mathbb{P}_{\mathsf{H}} u_{N+1} = 0,
\end{equation}
where $\mathbb{P}_{\mathsf{H}}$ denotes the $L^2$-orthogonal projection onto $\mathsf{H}$. We claim the resulting $u_{N+1}$ is odd:
\begin{lemma}
  \label{}
  Define $D_\ve$ as in \eqref{dvedef} and suppose that
    that $\omega_N$ is an even and $m$-fold symmetric vector field on $D_\ve$.
    If $u_{N+1}$ is the unique solution of the system
    \eqref{ellN1}-\eqref{ellN3} satisfying the condition
    \eqref{proj}, then $u_{N+1}$ is odd and $m$-fold symmetric.
\end{lemma}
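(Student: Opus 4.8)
The plan is to exploit the \emph{uniqueness} built into the normalization \eqref{proj}: I would produce, from $u_{N+1}$, a second solution of \eqref{ellN1}--\eqref{ellN3} obeying \eqref{proj}, and then conclude that the two coincide. Write $R=R_\Pi$ and let $X\mapsto -R^{*}X$ be the \emph{odd reflection} acting on vector fields on $D_\ve$, with $R^{*}X=R\,X\circ R$ as in Definition~\ref{baseparity}. Oddness of $u_{N+1}$ is precisely the identity $-R^{*}u_{N+1}=u_{N+1}$, so it is enough to show that $-R^{*}u_{N+1}$ again solves the normalized system.

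First I would record the transformation rules. Since $g$ is even with respect to $\Pi$, the domain $D_\ve$ of \eqref{dvedef} is $R$-invariant, hence so is $\pa D_\ve$; therefore $R^{*}$ (and $-R^{*}$) carries fields tangent to $\pa D_\ve$ to fields tangent to $\pa D_\ve$. For any isometry the divergence commutes with pullback, $\div(R^{*}X)=(\div X)\circ R$, so $\div(-R^{*}u_{N+1})=0$. The reflection $R$ is orientation-reversing, so the curl picks up a sign: a short computation in Cartesian coordinates (equivalently, $\curl=\sharp\star\rmd\flat$ together with $R^{*}\star=-\star R^{*}$) gives $\curl(R^{*}X)=-R^{*}(\curl X)$, whence $\curl(-R^{*}u_{N+1})=R^{*}(\curl u_{N+1})=R^{*}\omega_N=\omega_N$, the last step using that $\omega_N$ is even. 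Thus $-R^{*}u_{N+1}$ solves \eqref{ellN1}--\eqref{ellN3}.

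It remains to verify \eqref{proj}. Because $R$ is a linear isometry preserving $D_\ve$, the operator $R^{*}$ is orthogonal on $L^{2}(D_\ve)$ vector fields, and so is $-R^{*}$. Moreover $-R^{*}$ maps the harmonic subspace $\mathsf H$ into itself: any element of $\mathsf H$ is divergence- and curl-free and tangent to $\pa D_\ve$, and the three rules above show $-R^{*}X$ shares all three properties. An orthogonal operator preserving $\mathsf H$ also preserves $\mathsf H^{\perp}$, hence commutes with $\mathbb P_{\mathsf H}$; therefore $\mathbb P_{\mathsf H}(-R^{*}u_{N+1})=-R^{*}\,\mathbb P_{\mathsf H}u_{N+1}=0$. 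By the assumed uniqueness of the normalized solution, $-R^{*}u_{N+1}=u_{N+1}$, i.e. $u_{N+1}$ is odd. The $m$-fold symmetry follows the same template with $R$ replaced by the rotation $S$ by $2\pi/m$ about the $z$--axis: $S$ preserves $D_\ve$ since $g$ is $m$-fold symmetric, $S$ is orientation-\emph{preserving} so $\curl(S^{*}X)=S^{*}(\curl X)$ with no sign, $S^{*}\omega_N=\omega_N$ by hypothesis, and $S^{*}$ is $L^{2}$-orthogonal and preserves $\mathsf H$; uniqueness again forces $S^{*}u_{N+1}=u_{N+1}$.

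The only step that is not purely formal is the bookkeeping around $\mathsf H$ and $\mathbb P_{\mathsf H}$ — one needs that the symmetry operations preserve the harmonic subspace and act orthogonally, so that they commute with the projection. Everything else is the standard functoriality of $\div$ and $\curl$ under isometries, with the orientation sign for the reflection tracked carefully. (The existence and uniqueness of the normalized $u_{N+1}$ — solvability of the div--curl problem under the relevant flux conditions — is presupposed by the statement and is not re-examined here.)
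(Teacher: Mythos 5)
Your proof is correct and follows essentially the same strategy as the paper's: show that the reflected field is another normalized solution of the div--curl system, then invoke uniqueness (equivalently, the paper directly shows $u+R_\Pi^{*}u$ is harmonic and orthogonal to $\mathsf H$, hence zero). The only noteworthy variation is your handling of the normalization \eqref{proj}: the paper determines the parity of the generator $u_{\mathsf H}$ of $\mathsf H$ (arguing it is even because it perturbs $e_z$ and the space is one-dimensional), whereas you observe that $-R_\Pi^{*}$ is an $L^{2}$-isometry preserving the finite-dimensional space $\mathsf H$ and therefore commutes with $\mathbb P_{\mathsf H}$; this sidesteps the question of $u_{\mathsf H}$'s parity and is a mild simplification.
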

\begin{proof}
    To cut down on notation, we drop the subscripts $N, N+1$.
    Let $v  = u + R_\Pi^* u$
    where recall $R_\Pi^* u$ denotes the pullback of $u$ by 
    the reflection $R_\Pi$.
    The parity assumptions on $\omega$ and on the domain ensure that
    $v$ is harmonic.

    Letting  $u_{\mathsf{H}}$ be an $L^2-$normalized
    harmonic vector field on $D_\ve$, we can write $\mathbb{P}_{\mathsf{H}} u
    = \int_{D_\ve} u(x) u_{\mathsf{H}}(x)\rmd x$. We claim that under our assumptions,
    $u_{\mathsf{H}}$ is even. Indeed, both its even and odd parts are harmonic
    and are therefore a multiple of $u_{\mathsf{H}}$ (since the space of harmonics is one-dimensional). It follows that
    $u_{\mathsf{H}}$ must be either even or odd. Since $u_{\mathsf{H}}$ is
    a perturbation of the even vector field $u_{\mathsf{H}_0} = e_z$, 
    it follows that it is even.  As a result, $\mathbb{P}_{\mathsf{h}} R_\Pi^* u \rmd x= 
    \int_{D_\ve} R_\Pi^* u u_{\mathsf{H}} = -\int_{D_\ve} u R_\Pi^* \mathsf{u}_{\mathsf{H}} \rmd x
    = -\int_{D_\ve} u  \mathsf{u}_{\mathsf{H}}\rmd x = 0$, by assumption,
    so both $u$ and $R_\Pi^*u$ are orthogonal to $\mathsf{u}_{\mathsf{H}}$. Thus
    $u + R_\Pi^*u = 0,$ as needed. The $m$-fold symmetry follows similarly.
\end{proof}

%
%
%To see this, note that the parity
%of $\omega_N$ and $g$ guarantees that $v = u_{N+1} + R u_{N+1}\circ R$ is
%a harmonic vector field. Now,
%This does not uniquely determine $u_{N+1}$, \Red{there is a one-dimensional
%    family of harmonic vector fields. We claim that each of them is even.
%    To see this note that if $h$ is one of them and $h'$ its reflection,
%    then $h - h'$ is harmonic, thus parallel to $h$, $h - h' = \alpha h$
%    for some $\alpha$. But also, $\int h h' = 0$ and it follows that $\alpha = 0$,
%    thus $h = h'$ and $h$ is even.
%
%...
%
%    since there is a 
%    harmonic vector field $h$ on $D_\ve$ which is tangent to $\pa D_\ve$
%    (this $h$ is a perturbation of $e_z$).
%To make it unique, we require that $\int_{D_\ve} u_{N+1} h = 0$.
%Since $\omega_N$ is even and since the domain $D_\ve$ is given by the graph
%over the cylinder
%of an even function, it follows from uniqueness that the resulting $u_{N+1}$ is odd.}

We now show that the sequence $\{u_N\}$ converges.
Becuase the travel time defined in \eqref{tauN} is discontinuous
across $\Pi$, and because each $u_N$ will be
odd, it is convenient to work in the domain
$\widetilde{D}_\ve = D_\ve \cap \{y \geq 0\}$
instead of $D_\ve$. We will show that the restrictions $\widetilde{u}_N = u_N|_{\widetilde{D}_\ve}$
converge with respect to the H\"{o}lder norms
\begin{equation}
  \label{}
  \|u\|_{k, \alpha} = \|u\|_{C^{k, \alpha}(\widetilde{D}_\ve)},
\end{equation}
for $k \geq 2, \alpha \in (0, 1)$, provided $\ve$ is taken sufficiently small.
Letting $\widetilde{u}$ denote the limit of the $\widetilde{u}_N$, we then extend
$\widetilde{u}$ to a vector field $u$ defined in all of $D_\ve$ by parity,
and the resulting vector field will be in $C^{k, \alpha}$ away from
$\Pi$. We will show later on that in fact it is $C^{k,\alpha}$ across
$\Pi$ as well.

We now prove the needed bounds. In order to close our estimates, we will use the following
Poincar\'{e}-type inequality.
\begin{lemma}
  \label{Xpoin}
  Let $U$ be a domain foliated by a family of simple, $C^{k,\alpha}$-smooth
  and $m-$fold symmetric curves; that is, for each curve
  $\gamma$, there is a curve $\gamma_0$ so that
  $\gamma = \bigcup_{j=0}^{m-1}  O_{2\pi/m}^j\gamma_0$.
  Let $X$ be a $C^{k,\alpha}$ vector field on $U$ so that $X|_\gamma$ is tangent
  to $\gamma$ for each curve $\gamma$ in the foliation, and that the $X$ period $T(\gamma) := \int_\gamma \frac{\rmd s}{|X|}\leq C_{\mathsf{per}}$ is bounded uniformly.
  Further assume that $X$ is $m$-fold symmetric in the sense that $X= ({O_{2\pi/m}})_* X$
  where $({O_{2\pi/m}})_* $ denotes the pushforward.  Then, if $f$ is an $m$-fold symmetric function with $\int_{\gamma} f\, \rmd s = 0$ for each
  curve $\gamma$ in the foliation, then there is a constant
  $C = C\left(C_{\mathsf{per}},\|X\|_{C^{k,\alpha}}, \|\gamma\|_{C^{k,\alpha}},
     k, \alpha\right)$ so 

  \begin{equation}
    \label{poin}
    \| f \|_{C^{k,\alpha}(U)} \leq   \frac{C}{m} \| \nabla_X f\|_{C^{k,\alpha}(U)}.
  \end{equation}

\end{lemma}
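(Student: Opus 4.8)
\emph{Proof strategy.} The factor $1/m$ will come entirely from a one–dimensional Poincar\'e inequality on a circle whose circumference is $O(1/m)$, reached by passing to coordinates adapted to the foliation. Working in local trivializations of the circle bundle $U\to(\text{space of leaves})$ and patching, one sets up coordinates $(\sigma,y)$, with $y$ labelling the leaf $\gamma_y$ and $\sigma\in\mathbb{R}/L(y)\mathbb{Z}$ arclength along $\gamma_y$ from a chosen local section, where $L(y)$ is the length of $\gamma_y$. Since the leaves and $X$ are $C^{k,\alpha}$ and $X$ does not vanish on $U$ (being tangent to the closed leaves), the change of variables $\Phi\colon(\sigma,y)\mapsto x$ is a $C^{k,\alpha}$ diffeomorphism with norms controlled by the quantities in the statement, and $\Phi$ does \emph{not} depend on $m$; moreover $L(y)=\int_{\gamma_y}|X|\,\frac{\rmd s}{|X|}\le C_{\mathsf{per}}\sup_U|X|=:C$, so $\|L\|_{C^{k,\alpha}}\le C$. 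Because $X=(O_{2\pi/m})_*X$, the rotation $O_{2\pi/m}$ restricts on each leaf to an orientation–preserving isometry of order $m$, hence to a shift $\sigma\mapsto\sigma+L(y)/m$; combined with $\gamma_y=\bigcup_j O_{2\pi/m}^j\gamma_{y,0}$ this forces $L(y)=m\,\ell(y)$ with $\ell(y)$ the length of the fundamental arc, so $\|\ell\|_{C^{k,\alpha}}\le C/m$. The $m$–fold symmetry of $f$ then reads $f(\sigma+\ell(y),y)=f(\sigma,y)$, i.e.\ $f(\cdot,y)$ has period $\ell(y)$, and the hypothesis $\int_{\gamma_y}f\,\rmd s=0$ becomes $\int_0^{\ell(y)}f(\sigma,y)\,\rmd\sigma=0$. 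Finally, along each leaf $\partial_\sigma f=|X|^{-1}\nabla_X f=:h$, with $\|h\|_{C^{k,\alpha}}\le C\|\nabla_X f\|_{C^{k,\alpha}}$.

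The engine is the following \emph{circle Poincar\'e inequality with gain}: if $\phi\in C^{j,\alpha}(\mathbb{R}/L\mathbb{Z})$ has $\int\phi=0$, then $\|\phi\|_{C^{j,\alpha}}\le C_{j,\alpha}\,L\,\|\phi'\|_{C^{j,\alpha}}$. For $j=0$ this is elementary: $\|\phi\|_\infty\le\tfrac{L}{2}\|\phi'\|_\infty$, and for points at distance $\delta$ one writes $\phi(\sigma)-\phi(\sigma')=\int(\phi'-\phi'(t_1))+\delta\,\phi'(t_1)$, where $t_1$ is a zero of $\phi'$ (which exists since $\int\phi'=0$), giving $[\phi]_\alpha\le L\,[\phi']_\alpha$. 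For $j\ge1$ one induces, splitting $\|\phi\|_{C^{j,\alpha}}\lesssim\|\phi\|_{C^{j-1,\alpha}}+\|\phi^{(j)}\|_{C^{0,\alpha}}$ and applying the $j=0$ bound to the mean–zero function $\phi^{(j)}$: $\|\phi^{(j)}\|_{C^{0,\alpha}}\le CL\,\|\phi^{(j+1)}\|_{C^{0,\alpha}}=CL\,\|(\phi')^{(j)}\|_{C^{0,\alpha}}\le CL\,\|\phi'\|_{C^{j,\alpha}}$. Applied leafwise with $L=\ell(y)\le C/m$ to $\phi=f(\cdot,y)$, $\phi'=h(\cdot,y)$, this controls all pure $\sigma$–derivatives: $\|\partial_\sigma^b f\|_{C^{k-b,\alpha}(\mathrm{leaf})}\le\frac{C}{m}\|\nabla_X f\|_{C^{k,\alpha}}$ for $0\le b\le k$; note that every $\partial_\sigma^b f$ with $b\ge1$ is again $\ell(y)$–periodic with $\sigma$–mean zero, so the gain $\|\partial_\sigma^b f\|\le C\ell\|\partial_\sigma^{b+1}f\|=C\ell\|\partial_\sigma^b h\|$ cascades, which is how $1/m$ reappears at every order.

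It remains to control the transverse and mixed derivatives $\partial_y^a\partial_\sigma^b f$ with $a\ge1$, $a+b\le k$, which I would do by induction on the differentiation order (the case $a=0$ being the previous paragraph). Differentiating the relations $\partial_\sigma f=h$ and $\int_0^{\ell(y)}f\,\rmd\sigma=0$ in $y$, each $\partial_y^a\partial_\sigma^b f$ splits into (i) a part that is $\sigma$–mean zero along each leaf, whose $\sigma$–derivative is $\partial_y^a\partial_\sigma^b h$, a derivative of $h$ of order $\le k$, so that the circle inequality supplies a factor $\ell(y)\le C/m$; and (ii) correction terms arising from the $y$–dependence of the period, each carrying a factor $\ell^{(r)}$ with $r\ge1$ (of size $O(1/m)$, with $O(1/m)$ H\"older seminorm) times a lower–order derivative of $f$ already estimated by the induction. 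Both contributions are $\le\frac{C}{m}\|\nabla_X f\|_{C^{k,\alpha}}$. Summing over $|\beta|\le k$ and transporting back through the $m$–independent chart $\Phi$ gives $\|f\|_{C^{k,\alpha}(U)}\le C\|f\|_{C^{k,\alpha}_{(\sigma,y)}}\le\frac{C}{m}\|\nabla_X f\|_{C^{k,\alpha}(U)}$, as claimed.

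The conceptual content — the circle inequality with gain and its leafwise use — is short; the main obstacle is the bookkeeping in the third paragraph: one must verify that when transverse derivatives fall on the leafwise averages and on the $y$–dependent period $\ell(y)$, \emph{every} resulting term genuinely acquires a factor $1/m$, either through the small circumference $\ell(y)$ in the circle bound or through a derivative $\ell^{(r)}$, $r\ge1$, of the uniformly $O(1/m)$ period function, and in particular one must organize the induction so that no term ever requires controlling a derivative of $f$ of order $>k$. The top–order H\"older seminorms ($|\beta|=k$) are the most delicate point: there one splits the seminorm into its leaf–tangential and transverse parts, handles the former by the H\"older form of the circle inequality, and the latter by a direct difference estimate exploiting that $f$ is $\sigma$–periodic up to $O(1/m)$ corrections.
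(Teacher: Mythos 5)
Your argument is correct in its essentials and extracts the gain $1/m$ from exactly the same geometric source as the paper's proof — the fundamental arc $\gamma_0$ of each leaf, obtained by cutting along the $O_{2\pi/m}$-orbit, has length/traversal time $O(1/m)$ — but it packages this in a genuinely different way. The paper avoids coordinates entirely: it works with the flow map $\Phi_t$ of $X$, averages the identity $f(x)-f(y)=\int_0^{T(x,y)}(\nabla_X f)\circ\Phi_t(x)\,\rmd t$ over $y\in\gamma_0$ to obtain an integral representation for $f(x)$ in terms of $\nabla_X f$, reads off the zeroth-order bound from $T(\gamma_0)=T(\gamma)/m$, and then bounds higher derivatives by differentiating that representation directly, cascading via the observation that $\nabla_X f$ is again $m$-fold symmetric and mean-zero. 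You instead straighten the foliation into arclength coordinates $(\sigma,y)$, observe $\ell(y)=L(y)/m$, and reduce to a one-dimensional circle Poincar\'e inequality applied leafwise. Each approach buys something: your route makes the $\sigma$-cascade automatic (each $\partial_\sigma^b f$, $b\geq 1$, is $\sigma$-mean-zero by periodicity, with no further hypothesis needed), and cleanly isolates the $1/m$ as a small circumference; the paper's route avoids constructing and controlling an $m$-independent coordinate chart, and never needs a positive lower bound on $|X|$, which is convenient since in the application $X=u_N$ vanishes on the axis, whereas your change of variables and the identity $\partial_\sigma f=\nabla_X f/|X|$ require $|X|$ bounded away from zero (this is a real restriction of your set-up that you should flag or work around, e.g. by excising a small neighborhood of the vanishing set). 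Both write-ups are at the sketch level for the mixed/higher-order terms; the paper differentiates its integral formula once and asserts the rest is similar, while you identify — correctly — that the delicate point is that $\partial_y f$ is only $\ell(y)$-periodic up to an $O(1/m)$ correction $-\ell'(y)\partial_\sigma f$, so the decomposition into a mean-zero piece plus small corrections has to be organized carefully in the induction. Your approach would carry that bookkeeping more transparently if you first rescale $\tilde\sigma=\sigma/\ell(y)$ to work on a fixed circle $\mathbb{R}/\mathbb{Z}$, at which point the corrections appear as explicit $\ell,\ell'$ factors.
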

\begin{proof}

    By the $m$-fold symmetry, we can write $U = \bigcup_{j = 0}^{m-1} (O_{2\pi/m})^j U_0$
    for a domain $U_0$, and it suffices
    to prove the bound \eqref{poin} with $U$ replaced by $U_0$. Fix a curve
    $\gamma$ in the foliation and let $\gamma_0$ denote its restriction to $U_0$. Let $x,y\in \gamma_0$ and let $\Phi_t$ be the flowmap for $X$, i.e.
$$
\frac{\rmd }{\rmd t} \Phi_t = X\circ \Phi_t , \qquad \Phi_0={\rm id}.
$$
Let $T(x,y)$ be such that $\Phi_{T(x,y)}(x) = y$. 
Since $f$ is mean zero on each $\gamma$ and is $m$-fold symmetric,
    $f$ is mean zero on $\gamma_0$, so
integrating the expression $f(x)- f(y) =\int_0^{T(x,y)} (\nabla_X  f)\circ \Phi_t(x) d t$
in $y$ over $\gamma_0$, we find
\begin{equation} \label{finvform}
f(x) = \frac{1}{{\rm length}(\gamma_0)}\int_{\gamma_0}\int_0^{T(x,y)} (\nabla_X  f)\circ \Phi_t(x) \rmd t \rmd y,
\end{equation}
and it follows that for any $x$ on $\gamma_0$,
\begin{align*}
|f(x)| \leq \frac{1}{{\rm length}(\gamma_0)}\int_{\gamma_0}\int_0^{T(x,y)} |(\nabla_X  f)\circ \Phi_t(x)| \rmd t \rmd y 
%&=  \frac{1}{{\rm length}(\gamma_0)}\int_{\gamma_0}\int_{\gamma_0} |\nabla_X  f| \frac{ds}{| X|} dy\\
&\leq  T(\gamma_0)\| \nabla_X f\|_{L^\infty}
\end{align*}
where 
\be
T(\gamma_0):=  \int_{\gamma_0} \frac{\rmd s}{|X|}
\ee
is the time taken to traverse the segment $\gamma_0$, end-to-end. By the $m$-fold
symmetry, $T_*(\gamma_0)
= \frac{1}{m}T_*(\gamma)$. Since the curves $\gamma_0$
foliate the domain $U_0$, this gives the bound when
$k = \alpha = 0$. We now show how to bound the first derivative, higher-order derivatives
being similar.  We express the line integral via its parametrization by the flow of $X$, namely $\Phi_t(x):[s_1(x),s_2(x)] \to \gamma_0$:
\begin{align}
f(x) &= \frac{1}{\int_{s_1(x)}^{s_2(x)} |X\circ \Phi_s(x)|  ds}\int_{s_1(x)}^{s_2(x)} \int_0^{T(x,\Phi_s(x))} (\nabla_X  f)\circ \Phi_t(x) |X\circ \Phi_s(x)| \rmd t \rmd s,
\end{align}
Note that derivatives of $s_i(x)$ and $\Phi_t(x)$ are bounded by derivatives of $X$.  Thus
\begin{align*}
|\nabla f(x)| &\leq  \Bigg[\frac{1}{m} T(\gamma) \|s_i\|_{C^1} \|X\|_{L^\infty} \|\nabla_X  f\|_{L^\infty}  +\frac{\sup_{t\in [s_1,s_2]}\|\nabla_x T(\cdot,\Phi_t (\cdot) )\|_{L^\infty}}{{\rm length}(\gamma_0)} \int_{\gamma_0}|\nabla_X  f(y)| d y \\
&\qquad + \frac{\sup_t\| \nabla \Phi_t(\cdot ) \|_{L^\infty}}{{\rm length}(\gamma_0)}\int_{\gamma_0}\int_0^{T(x,y)} |\nabla \nabla_X  f\circ \Phi_t(x)| \rmd t \rmd y\Bigg].
\end{align*}
Note that, if $f$ is  $m$-fold symmetric, then so is $\nabla_X f$ and, moreover, it is mean zero on each $\gamma_0$.  As such, can can apply the estimate for $\|f\|_{L^\infty}$ to arrive at $\|\nabla_X f\|_{L^\infty}\lesssim 1/m \|\nabla \nabla_X f\|_{L^\infty}.$ Likewise, the last term is bounded by the same argument.  This concludes the sketch of the argument.  Higher derivatives follow similarly.
\end{proof}

We now apply this lemma to $U = \widetilde{D}_\ve$ with $X = u_N$.  The period function of $u_N$ satisfies all assumptions of the above theorem, because $u_N$ is odd, close to $u_*$ and vanishes along the axis. 
Since $\oint_{\gamma_N} \tau_N \rmd s = 0$
for each field line $\gamma_N$, taking $m$ even, it follows from the
$m$-fold symmetry that $\int_{\gamma_N \cap \widetilde{D}_\ve} \tau_N \rmd s = 0$,
and so by \eqref{poin} and the equation \eqref{tauNprime}, $\tau_N'$ satisfies
\begin{equation}
  \label{}
  \|\tau_N'\|_{k,\alpha} \leq \frac{C}{m}  \|u_N'\|_{k, \alpha}\|\nabla \tau_*\|_{k, \alpha},
\end{equation}
for a constant $C > 0$,
where $u_N' = u_N - u_*$ is the perturbation of the velocity field.
Since $T_N = \tau_N|_{\Pi_+} - \tau_N|_{\Pi_-}$, we also have that $T_N' = T_N - T_*$ satisfies
\begin{equation}
  \label{}
  \|T_N'\|_{k, \alpha} \leq \frac{C'}{m}\|u_N'\|_{k, \alpha}\|\nabla \tau_*\|_{k, \alpha},
\end{equation}
for a constant $C' >0$,
and it follows that with $\omega_N$ defined in \eqref{omegaN}, $\omega'_N = \omega_N - \omega_*$
satisfies
\begin{align}
  \label{}
  \|\omega_N'\|_{k, \alpha}
  \lesssim \|T_N\|_{k+1, \alpha}\| \tau_N'\|_{k+1, \alpha} 
  + \|T_N'\|_{k + 1, \alpha}\|\tau_N\|_{k+1,\alpha} 
  \lesssim \frac{1}{m}  \| u_N'\|_{k+1, \alpha},
\end{align}
provided $\|u'_N\|_{k+1, \alpha} \leq 1$, say,
where the implicit constant depends on $k, \alpha, u_*$ and $\mathcal{H}_*$, but not on $m$.
If we now define $u_{N+1}$ by solving \eqref{ellN1}--\eqref{proj}, the perturbed velocity
field $u_{N+1}' = u_{N+1} - u_*$ satisfies the system
\begin{alignat}{2}
  \label{}
  \div u_{N+1}' &= 0, &&\qquad \text{ in } D_\ve,\\
  \curl u_{N+1}' &= \omega_N'&&\qquad \text{ in } D_\ve,\\
  u_{N+1}'\cdot n &= -\ve u_*\cdot \nabla g,&&\qquad \text{ on } \pa D_\ve
\end{alignat}
and so by standard elliptic estimates (see e.g. \cite{ADN64}), it satisfies the bounds
\begin{align}
  \label{}
  \|u_{N+1}'\|_{k+1, \alpha}
  &\lesssim \|\omega_N'\|_{k, \alpha} + 
  \ve\|g\|_{C^{k+1,\alpha}(\mathbb{S}^1\times \mathbb{S}^1)}\\
  &\lesssim  
  \frac{1}{m}\|u_N'\|_{k+1, \alpha} + \ve \|g\|_{C^{k+1,\alpha}(\mathbb{S}^1\times \mathbb{S}^1)}.
\end{align}
It follows that there is a constant $M > 0$ and $m_1, \ve_1$ so that if $m > m_1$ and $\ve < \ve_1$, 
then the bound $ \|u_N'\|_{k+1, \alpha} \leq M$ implies the same bound for $\|u_{N+1}\|_{k+1, \alpha}$.
In the same way, we find
\begin{equation}
  \label{}
  \|u_{N+1}' - u_{N}'\|_{k+1, \alpha} \lesssim \frac{1}{m} 
  \|u_N' - u_{N-1}'\|_{k+1, \alpha},
\end{equation}
for an implicit constant independent of $\ve$ and $m$,
if $\|u_{N}\|_{k+1,\alpha}, \|u_{N+1}'\|_{k+1,\alpha} \leq M.$
It follows that, taking $\ve, 1/m$ smaller if needed, the sequence $\{u_N\}$ converges
in $C^{k+1,\alpha}(\widetilde{D}_\ve)$. The limit $\widetilde{u}$
satisfies \eqref{Eulereqn} in $\widetilde{D}_\ve$, and 
we extend $\widetilde{u}$ to all of $D_\ve$ by
$
  u(p) = (R_\Pi\widetilde{u})(R_\Pi p),
  $
whenever $p \in D_\ve \setminus \widetilde{D}_\ve$, noting that
the reflection $R_\Pi$ maps $D_\ve \setminus \widetilde{D}_\ve$
to $\widetilde{D}_\ve$. 
Then $u$ is a $C^{k+1,\alpha}$ solution of \eqref{Eulereqn} away
from $\Pi$,
and by construction it has continuous vorticity
across $\Pi$. 

We now claim that in fact $u \in C^{k+1, \alpha}$ everywhere.
Indeed, since $\omega \in C^{k,\alpha}$ up to $\Pi$ and since it
is continuous across $\Pi$, it follows that $\omega$ is actually
Lipschitz across $\Pi$, and since $u$ satisfies the div-curl system
\begin{alignat}{2}
  \label{}
  \div u &=0, &&\qquad \text{ in } D_\ve,\\
  \curl u &= \omega, &&\qquad \text{ in } D_\ve\\ 
  u\cdot n &= 0, &&\qquad \text{ on } \pa D_\ve,
\end{alignat}
it follows that $u\in C^{1, \alpha}(D_\ve)$
(in fact, $u\in C^{1, 1-}(D_\ve)$).
Since $u$ satisfies \eqref{Eulereqn} by construction, it
follows that $u$ is a strong solution of the steady Euler equations
\eqref{eul1} and thus that the steady vorticity equation
\begin{equation}
  \label{transport}
    u\cdot \nabla \omega = \omega\cdot\nabla u
\end{equation}
holds. By construction, $\omega$ is $C^{k,\alpha}$ regular in the directions
tangent to $\Pi$ but so far we have only shown that it is continuous across $\Pi$.
Recalling that $u$ is normal to $\Pi$ at $\Pi$, from
\eqref{transport} it follows from the above that $u\cdot \nabla \omega
\in C^{0, \alpha}$ across $\Pi$, and so we have shown
that $\omega \in C^{1, \alpha}$ globally. This then implies $u \in C^{2, \alpha}$,
and repeatedly applying this argument shows that $u \in C^{k, \alpha}$, as needed.
Since the above holds for any $k$, this completes the proof of Thm. \ref{mainthm}.

\vspace{2mm}

\noindent \textbf{Acknowledgements.}  We are grateful to Peter Constantin for his guidance and for sharing with us his vision of fluid dynamics and PDE.  We thank D. Peralta Salas for point out the work of Salat--Kaiser \cite{SK95,KS97}. We thank also the members of the Simons collaboration for Hidden Symmetries and Fusion Energy for discussion and insight on Grad's conjecture.  The work of TDD was supported by the NSF CAREER award \#2235395, a Stony Brook University Trustee’s award as well as an Alfred P. Sloan Fellowship. The work of TME was supported by a Simons Fellowship and the NSF grants DMS-2043024 and DMS-2510472.  The work of DG was supported by a startup grant from Brooklyn College, PSC-CUNY grant TRADB-55-214, and the NSF grant DMS-2406852.

\vspace{-4mm}

\qquad\\
Theodore D. Drivas\\
Department of Mathematics\\
Stony Brook University, Stony Brook NY 11790, USA\\
{\tt tdrivas@math.stonybrook.edu}

\qquad\\
Tarek M. Elgindi\\
Department of Mathematics\\
Mathematics Department, Duke University, Durham, NC 27708, USA\\
{\tt  tarek.elgindi@duke.edu}

\qquad\\
Daniel Ginsberg\\
Department of Mathematics\\
Brooklyn College (CUNY), Brooklyn, NY 11210, USA\\
{\tt daniel.ginsberg@brooklyn.cuny.edu}


\begin{thebibliography}{99}

\bibitem{ADN64}
Agmon, S., Douglis, A. and Nirenberg, L., 1964. Estimates near the boundary for solutions of elliptic partial differential equations satisfying general boundary conditions II. Communications on pure and applied mathematics, 17(1), pp.35-92.

\bibitem{A66}
Arnold, V., I., On the topology of three-dimensional steady flows of an ideal fluid. Prikl.
Mat. Meh. 30 183–185 (Russian); translated as J. Appl. Math. Mech. 30 1966 223-226

\bibitem{AK09}
Arnold, V. I., and Khesin, B. A. (2009). Topological methods in hydrodynamics (Vol. 19). New York: Springer.


\bibitem{C17}
Constantin, P. Analysis of hydrodynamic models. Society for Industrial and Applied Mathematics, 2017.

\bibitem{CDG21a}
Constantin, P., Drivas T. D., and Ginsberg, D. Flexibility and rigidity in steady fluid motion. Communications in Mathematical Physics 385.1 (2021): 521-563.

\bibitem{CDG21b} Constantin, P., Drivas T. D., and Ginsberg, D. On quasisymmetric plasma equilibria sustained by small force. Journal of Plasma Physics 87.1 (2021): 905870111.

\bibitem{DE23} 
Drivas, T. D., and Elgindi, T. M.  Singularity formation in the incompressible Euler equation in finite and infinite time. EMS Surveys in Mathematical Sciences 10.1 (2023)


\bibitem{EP12}
Enciso, A., and Peralta-Salas, D. Knots and links in steady solutions of the Euler equation. Annals of Mathematics (2012): 345-367.


\bibitem{EP23}
Enciso, A.,  Peralta-Salas, D.  and  Romaniega, A. Beltrami fields exhibit knots and chaos almost surely. Forum of Mathematics, Sigma. Vol. 11. Cambridge University Press, 2023.


\bibitem{G1}
Grad, H.: Toroidal containment of a plasma. Phys. Fluids 10(1), 137–154 (1967)
\bibitem{G71}
Grad, H. Plasma containment in closed line systems. Plasma Physics and Controlled Nuclear Fusion Research 1971. Vol. III. Proceedings of the Fourth International Conference on Plasma Physics and Controlled Nuclear Fusion Research. 1971.

\bibitem{G2}
Grad, H.: Theory and applications of the nonexistence of simple toroidal plasma equilibrium. Int. J. Fusion
Energy 3(2), 33–46 (1985)


\bibitem{G3}
Grad, H., Rubin, H.: Hydromagnetic equilibria and force-free fields. In: Proceedings of the 2nd UN
Conference on the Peaceful Uses of Atomic Energy, vol. 31 (1958)

\bibitem{J20}
Jillbratt, G. Steady ideal flows with vorticity in toroidal domains and periodic cylinders. Master's Theses in Mathematical Sciences (2020).


\bibitem{KS97}
Kaiser, R., and  Salat, A. (1997). New classes of three-dimensional ideal-MHD equilibria. Journal of plasma physics, 57(2), 425-448.


\bibitem{L19}
Landreman, M.: Quasisymmetry: A hidden symmetry of magnetic fields. (2019)

\bibitem{L70}
Lortz, D., 1970. Existence of toroidal magnetohydrostatic equilibrium without rotational transformation. Z.
Angew. Math. Phys., 21.

\bibitem{SK95}
Salat, A., and  Kaiser, R. (1995). Three-dimensional closed field line magnetohydrodynamic equilibria without symmetries. Physics of Plasmas, 2(10).

\bibitem{SVW}
Seth, D. S.,  Varholm, K. and  Wahlén, E. Symmetric doubly periodic gravity-capillary waves with small vorticity. Advances in Mathematics 447 (2024): 109683.

\bibitem{S12}
Šverák, V. Selected Topics in Fluid Mechanics, Course notes. 2011/2012

\bibitem{WS20}
Weitzner, H., and Sengupta, W. (2020). Exact non-symmetric closed line vacuum magnetic fields in a topological torus. Physics of Plasmas, 27(2).

\bibitem{W77}
Woolley, M. L. On nonlinear axisymmetric equilibria in the magnetohydrodynamic approximation. Journal of Plasma Physics 18.3 (1977): 537-550.

\end{thebibliography}
\end{document}